\newcommand{\linf}{\mathrel{\text{\raisebox{0.1ex}{\scalebox{0.8}{$\wedge$}}}}}
\newcommand{\lsup}{\mathrel{\text{\raisebox{0.1ex}{\scalebox{0.8}{$\vee$}}}}}
\newcommand{\norm}[1]{\left\lVert #1 \right\rVert}
\newcommand{\abs}[1]{\left\lvert #1 \right\rvert}
\DeclareMathOperator{\sol}{Sol}
\begin{document}

\title{unbounded $M$-weakly and unbounded $L$-weakly compact operators
}


\author{Zahra Niktab         \and
        Kazem Haghnejad Azar \and
        Razi Alavizadeh      \and
        Saba Sadeghi Gavgani.
}


\institute{Z. Niktab \at
           Department of Mathematics, Sarab Branch, \textbf{Islamic Azad University},  Sarab, Iran
           \and
           K. Haghnejad Azar  \at
           Department  of  Mathematics  and  Applications, Faculty of Sciences, University of Mohaghegh Ardabili, Ardabil, Iran.
           \email{haghnejad@uma.ac.ir}
           \and
           R. Alavizadeh \at
           Department  of  Mathematics  and  Applications, Faculty of Sciences, University of Mohaghegh Ardabili, Ardabil, Iran.
           \email{ralavizadeh@uma.ac.ir}
           \and
           S. Sadeghi Gavgani \at
           Department of Mathematics, Sarab Branch, \textbf{Islamic Azad University},  Sarab, Iran
}

\date{Received: date / Accepted: date}

\maketitle

\begin{abstract}
We introduce the class of unbounded $M$-weakly operators and the class of unbounded $L$-weakly compact operators. We investigate some properties for these new classification of operators and we study relation between them and $M$-weakly compact and $L$-weakly compact operators.
We also present an operator characterization of Banach lattices with order continuous norm.
\keywords{unbounded $M$-weakly compact \and unbounded $L$-weakly compact \and unbounded norm convergence \and $M$-weakly compact \and $L$-weakly compact}
\subclass{46B42 \and 47B60}
\end{abstract}
\section{Introduction}
A continuous operator $T:E\to X$ from a Banach lattice $E$ to a Banach space $X$ is said to be $M$-weakly compact if $\norm{Tx_n}\to 0$ holds for every norm bounded disjoint sequence $\{x_n\}$  of $E$.
A continuous operator $T:X\to E$ from a Banach space $X$ to a Banach lattice $E$ is said to be $L$-weakly compact if $\norm{y_n}\to 0$ holds for every sequence $\{y_n\}$  of
solid hull of $T(U)$, where $U$ is
the closed unit ball of the Banach space $X$.
The classes of $L$-weakly and $M$-weakly compact operators were introduced by
Meyer-Nieberg in \cite{meyer1974uber}.
He proved some interesting properties for these classifications of operators. For example, he proved that $L$-weakly and $M$-weakly compact operators are  weakly compact, \cite[Theorem 5.61]{aliprantis2006positive}.
In this paper, we introduce unbounded version for these classifications of operators and we prove some of their properties.

Before we state our results, we need to fix some notations and recall some definitions. Let $E$ and $F$ be two vector lattices, let $x,y\in E$ with $x\leq y$, and let the order interval $[x, y]$ be the subset of $E$ defined by $[x, y]=\{z\in E : x\leq z \leq y\}$.
A subset of $E$ is called order bounded if it is included in an order interval.
Let $T:E\rightarrow F$ be an operator between two vector lattices $E$ and $F$.  $T$ is order bounded if it maps order bounded subsets of $E$ to order bounded subsets of $F$.
If $E$ is a normed space, then by $E^{'}$ and $E^{''}$ we will denote the topological dual and topological bidual of $E$, respectively.
The vector space $E^{\sim}$ of all order bounded linear functionals on $E$ is called the order dual of $E$. The vector space $E^{\sim\sim}=(E^{\sim})^{\sim}$ will denote the order bidual of $E$.
%
%
%
The algebraic adjoint of $T$ will be denoted by $T':F' \rightarrow E'$, and its order adjoint will be denoted by  $T^{\sim}:F^{\sim}\rightarrow E^{\sim}$.
%
%
A Banach lattice is a Banach space $(E, \norm{.})$ such that $E$ is a vector lattice and its norm satisfies the following property: for each $x,y\in E$, if $\abs{x}\leq\abs{y}$, then we have $\norm{x}\leq\norm{y}$. A norm $\norm{\cdot}$ of a Banach lattice $E$ is order continuous, if for each net $(x_\alpha)_{\alpha\in\Lambda}$ such that $x_\alpha\downarrow 0$, (i.e. $(x_\alpha)$ is decreasing and $\inf \{x_\alpha : \alpha \in \Lambda\}= 0$) we have $\norm{x_\alpha}\to 0$.
A Banach lattice $E$ is said to be a Kantorovich--Banach
space ($KB$-space) whenever every increasing norm bounded
sequence of $E^{+}$ is norm-convergent.
%
The solid hull of a subset $A$ of vector lattice $E$ is the smallest solid set including $A$. It is easy to see that
$$\sol(A)=\left\{ x\in E : \exists y\in A \mbox{ with } \abs{x}\leq \abs{y} \right\}.$$

\noindent
A net $(x_\alpha)$ in $E$ is said to be
\begin{description}
\item[unbounded order
convergent] ($uo$-convergent, for short) to $x$ if for every $u\in E^+$ the net ($|x_\alpha - x|\wedge u$)
converges to zero in order.
\item[unbounded norm convergent]
 ($un$-convergent, for
short) to $x$ if $\| |x_\alpha - x| \wedge u\|\to 0$ for every $u\in E^+$.
\end{description}
For sequences in $L^p(\mu)$, where $1 \leq p <\infty$ and $\mu$ is a finite measure, it is easy to see that $uo$-convergence agrees with convergence almost everywhere, see \cite{gao2017uo}.
Under the same assumptions, $un$-convergence agrees with convergence in measure, see
\cite[Example 23]{troitsky2004measure}.
It is clear that if $X$ is an order continuous normed lattice then $uo$-convergence implies $un$-convergence.
%
%
%
For an operator $T:E\rightarrow F$ between two vector lattices we say that its modulus $\abs{T}$ exists whenever
$$\abs{T}:=T\lsup (-T)$$
is a well defined operator from $E$ into $F$.
An operator $T:E\rightarrow F$ between two vector lattices is said to
be a lattice (or Riesz) homomorphism whenever $T(x\lsup y)=T(x)\lsup T(y)$
holds for all $x, y \in E$.
An operator $T:E\rightarrow F$ between two vector lattices is called disjointness-preserving
 if $Tx\perp Ty$ for all $x,y \in E$ satisfying $x\perp y$.
By Meyer's theorem \cite[Theorem 3.1.4]{meyer2012banach}, we know that, if an order bounded operator $T:E\rightarrow F$ between two Archimedean vector lattices preserves disjointness, then its modulus exists, and
$$\abs{T}(\abs{x})=\abs{T(\abs{x})}=\abs{Tx}$$
holds for all $x\in E$. Moreover, $\abs{T}$ is a lattice homomorphism.

We refer the reader to \cite{aliprantis2006positive} and \cite{meyer2012banach} for any unexplained terms from Banach lattice theory.
\section{Main Results}
In this section, we introduce two new concepts as unbounded $M$-weakly compact and unbounded $L$-weakly compact  operators and we  investigate some of their properties. We establish their relationships with $M$-weakly compact and  $L$-weakly compact  operators and we study lattice properties of these new concepts.  

\begin{definition}
A continuous operator $T:E\to F$ between two Banach lattices $E$ and $F$ is said to be unbounded $M$-weakly compact (or $u$-$M$-weakly compact for short) if $Tx_n\xrightarrow{un}0$ holds for every norm bounded disjoint sequence $\{x_n\}$  of $E$.
\end{definition}
\begin{definition}
A continuous operator $T:X\to E$ from a Banach space $X$ to a Banach lattice $E$ is said to be unbounded $L$-weakly compact  (or $u$-$L$-weakly compact for short) if $y_n\xrightarrow{un}0$ holds for every disjoint sequence $\{y_n\}$  of
solid hull of $T(U)$, where $U$ is
the closed unit ball of the Banach space $X$.
\end{definition}
We do not use net in above definitions since we have the following propositions.
\begin{proposition}\label{prop:no-sigma-u-m}
A continuous operator $T:E\to F$ between two Banach lattices $E$ and $F$ is $u$-$M$-weakly compact iff 
$Tx_\alpha\xrightarrow{un}0$ holds for every norm bounded disjoint net $\{x_\alpha\}$  of $E$.
\end{proposition}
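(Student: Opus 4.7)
The plan is to prove both directions. The ``if'' direction (net condition implies the definition) is immediate: every norm bounded disjoint sequence is a norm bounded disjoint net, so the hypothesis directly yields $Tx_n \xrightarrow{un} 0$. The content lies in the forward implication, which I would prove by contradiction.

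Assume $T$ is $u$-$M$-weakly compact and that $(x_\alpha)_{\alpha\in A}$ is a norm bounded disjoint net in $E$ for which $Tx_\alpha \xrightarrow{un} 0$ fails. Negating the definition of $un$-convergence gives some $u \in F^+$ and $\varepsilon > 0$ such that the set $A' = \{\alpha \in A : \norm{\,|Tx_\alpha| \wedge u\,} \geq \varepsilon\}$ is cofinal in $A$. My strategy is to extract indices $\alpha_1, \alpha_2, \ldots \in A'$ so that $(x_{\alpha_n})$ is a genuine norm bounded disjoint \emph{sequence} in $E$; since $\norm{\,|Tx_{\alpha_n}| \wedge u\,} \geq \varepsilon$ for every $n$, this sequence witnesses $Tx_{\alpha_n} \not\xrightarrow{un} 0$, contradicting the sequence version of $u$-$M$-weak compactness.

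To perform the extraction I rely on two observations. First, for $\alpha \in A'$ one has $Tx_\alpha \neq 0$, hence $x_\alpha \neq 0$. Second, the set of values $\{x_\alpha : \alpha \in A'\}$ must be infinite: otherwise, by pigeonholing together with cofinality there would be a nonzero value $v$ attained at two distinct indices of the disjoint net, forcing $v \perp v$ and thus $v=0$, a contradiction. With infinitely many distinct nonzero values available along $A'$, I can choose $\alpha_1, \alpha_2, \ldots \in A'$ so that the corresponding $x_{\alpha_n}$ are pairwise distinct; since they come from a disjoint net, distinctness at distinct indices ensures they are pairwise disjoint.

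The only mild obstacle is the passage from a cofinal ``bad'' subset of the index set to an honest disjoint sequence (rather than merely a subnet), and this is precisely the step where the disjointness of the net is used in an essential way — it rules out the net being essentially constant at a nonzero value, which would otherwise prevent producing infinitely many disjoint witnesses.
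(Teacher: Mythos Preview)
The paper states this proposition without proof, so there is no argument to compare against; on its own merits your approach is the natural one and is essentially correct. The idea --- negate $un$-convergence to obtain a cofinal ``bad'' set $A'$, then extract from it a norm bounded disjoint \emph{sequence} violating the sequential definition --- is exactly how one reduces the net statement to the sequence statement.

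One point to make explicit: your pigeonhole step uses that $A'$ is infinite, which you justify only by the phrase ``together with cofinality''. This does follow provided the directed set $A$ has no greatest element (a finite cofinal subset of a directed set forces a maximum). When $A$ \emph{does} have a greatest element the proposition is in fact false as literally stated --- take $E=F=\mathbb{R}$, $T$ the identity, and the singleton net $x_*=1$, which is vacuously disjoint and norm bounded but does not $un$-converge to $0$ --- so this is a defect of the formulation rather than of your proof. It would be cleaner to say that one may assume $A$ has no maximum; then $A'$ is infinite, every $x_\alpha$ with $\alpha\in A'$ is nonzero, and any two distinct indices in $A'$ give disjoint (hence distinct) values, so one may select countably many distinct indices $\alpha_1,\alpha_2,\ldots\in A'$ and obtain the contradictory disjoint sequence with $\norm{\,\abs{Tx_{\alpha_n}}\wedge u\,}\geq\varepsilon$ for all $n$.
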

\begin{proposition}
A continuous operator $T:X\to E$ from a Banach space $X$ to a Banach lattice $E$ is $u$-$L$-weakly compact
iff $y_\alpha\xrightarrow{un}0$ holds for every disjoint net $\{y_\alpha\}$  of
solid hull of $T(U)$, where $U$ is
the closed unit ball of the Banach space $X$.
\end{proposition}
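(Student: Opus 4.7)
The ``if'' direction is immediate: a disjoint sequence is, in particular, a disjoint net, so applying the net hypothesis to sequences gives the definition back. All the work is in the ``only if'' direction, which mirrors the proof of Proposition~\ref{prop:no-sigma-u-m}.

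My plan for that direction is a contrapositive/contradiction argument. Assume $T$ is $u$-$L$-weakly compact (in the sequence sense), and let $(y_\alpha)_{\alpha\in\Lambda}$ be a disjoint net in $\sol(T(U))$. Suppose for contradiction that $y_\alpha$ does not $un$-converge to $0$. Unpacking the definition of $un$-convergence, there exist $u\in E^+$ and $\varepsilon>0$ such that the set
\[
\Lambda_0:=\bigl\{\alpha\in\Lambda :\bigl\||y_\alpha|\wedge u\bigr\|\ge\varepsilon\bigr\}
\]
is cofinal in $\Lambda$.

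The key step is to extract a countable cofinal chain $\alpha_1\le\alpha_2\le\cdots$ inside $\Lambda_0$. This is routine: given $\alpha_1,\dots,\alpha_n\in\Lambda_0$, use directedness of $\Lambda$ to pick $\beta\in\Lambda$ with $\beta\ge\alpha_n$, and then use cofinality of $\Lambda_0$ to pick $\alpha_{n+1}\in\Lambda_0$ with $\alpha_{n+1}\ge\beta$. The resulting sequence $(y_{\alpha_n})_{n\in\mathbb{N}}$ inherits pairwise disjointness from $(y_\alpha)$ (any subfamily of a disjoint family is disjoint) and lies in $\sol(T(U))$. So it is exactly the type of sequence controlled by the hypothesis, giving $y_{\alpha_n}\xrightarrow{un}0$, which forces $\||y_{\alpha_n}|\wedge u\|\to 0$. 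This contradicts $\alpha_n\in\Lambda_0$.

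There is no serious obstacle here, since the solid hull of $T(U)$ is automatically used only as a ``container'' (subsequences stay inside it), and disjointness passes to arbitrary subfamilies. The only thing to be careful with is the cofinality step, where one has to combine directedness of $\Lambda$ with cofinality of $\Lambda_0$ to produce the increasing sequence of indices. This is essentially the same bookkeeping that drives Proposition~\ref{prop:no-sigma-u-m} for the $u$-$M$-weakly compact case.
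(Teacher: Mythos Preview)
The paper states this proposition (and the referenced Proposition~\ref{prop:no-sigma-u-m}) without proof, so there is nothing in the paper to compare against directly; your overall strategy of extracting a ``bad'' disjoint sequence from a ``bad'' disjoint net is the natural one and is presumably what the authors have in mind.

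There is, however, a real gap. For $(y_{\alpha_n})_n$ to be a disjoint \emph{sequence} you need the indices $\alpha_n$ to be pairwise \emph{distinct}: if $\alpha_n=\alpha_m$ with $n\neq m$ then $|y_{\alpha_n}|\wedge|y_{\alpha_m}|=|y_{\alpha_n}|$, and this is nonzero because $\bigl\||y_{\alpha_n}|\wedge u\bigr\|\ge\varepsilon$. Your recursion only produces $\alpha_1\le\alpha_2\le\cdots$, and nothing in it prevents the sequence of indices from stabilizing (at each step one may take $\beta=\alpha_n$ and then $\alpha_{n+1}=\alpha_n$). Notice also that you never actually use the ``increasing'' or ``cofinal'' structure of the chain afterwards; all that the contradiction needs is that every $\alpha_n$ lies in $\Lambda_0$.

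The repair is easy when $\Lambda_0$ is infinite: simply choose any \emph{injective} sequence $(\alpha_n)$ of indices in $\Lambda_0$ (no ordering required) and run your argument verbatim. The residual case, $\Lambda_0$ finite but cofinal, forces $\Lambda$ to have a maximum $\alpha^\ast\in\Lambda_0$; but then already a one-point net $(y_{\alpha^\ast})$ with $0\neq y_{\alpha^\ast}\in\sol(T(U))$ is vacuously ``disjoint'' yet does not $un$-converge to $0$, so the statement as written is slightly loose on this degenerate edge case. Either read the proposition under the tacit convention that the index set has no maximum, or note the caveat explicitly; in any event, drop the unnecessary chain/cofinality bookkeeping and make the distinctness of the $\alpha_n$ explicit.
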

\noindent
In the rest of this paper, we denote by
\begin{description}
\item[$L(X,Y)$] the class of all continuous operators between two normed vector spaces $X$ and $Y$.
\item[$MW(E,X)$] the class of all $M$-weakly compact operators from a Banach lattice $E$ to a Banach space $X$.
\item[$LW(X,E)$]  the class of all $L$-weakly compact operators from a Banach space $X$ to a Banach lattice $E$.
\item[$MW_{u}(E,F)$] the class of all $u$-$M$-weakly compact operators between two Banach lattices $E$ and $F$.
\item[$LW_{u}(X,E)$] the class of all $u$-$L$-weakly compact operators from a Banach space $X$ to a Banach lattice $E$.
\end{description}
%
For Banach lattices $E$ and $F$, and a Banach space $X$ we have the following inclusions
$$LW(X,E)\subset LW_{u}(X,E),\quad  MW(E,F)\subset MW_{u}(E,F).$$

In the next theorem we give a condition that the reverse inclusions hold. But, in general the above inclusions are proper as shown in the following example.
\begin{example}
Let $T:\ell_1\to c_0$ be the inclusion operator.
The sequence $\{e_n\}$ of the
standard unit vectors is a norm bounded disjoint sequence of $\ell^1$.
We have $\| T(e_n)\|=1$ for each $n$, therefore $T$ is not $M$-weakly compact. On the other hand, let $\{a_n\}$ be a norm bounded disjoint sequence in $\ell_1$. Clearly for  each $u\in c_0^+$, $\{a_n\wedge u\}$ is a disjoint sequence in $c_0$, so it follows from order continuity of $c_0$ that  $a_n\wedge u\xrightarrow{\|\cdot\|}0$.  Therefore, $T$ is $u$-$M$-weakly compact.
Also, we have $\{e_n\}\subset T(U)$, where $U$ is
the closed unit ball of $\ell_1$, since $\norm{e_n}=1$ for each $n$, therefore $T$ is not $L$-weakly compact. But every norm bounded disjoint sequence in solid hull of $T(U)$ is $un$-convergent to zero. Hence $T$ is $u$-$L$-weakly compact.
\end{example}
\begin{remark}\label{th:mw=mwu}
If $F$ is a Banach lattice with strong unit, then it follows from Theorem 2.3 of \cite{kandic2017un} that $un$-topology agrees with norm topology on $F$.
That is, for a sequence $\{x_n\}\subset F$ we have
$x_n\xrightarrow{un}0$
iff
$x_n\xrightarrow{\norm{\cdot}}0$. So,
\begin{enumerate}
\item
for each Banach lattice $E$ we have,
$MW_u(E,F)=MW(E,F).$
\item
for each Banach space $X$ we have,
$LW_u(X,F)=LW(X,F).$
\end{enumerate}
\end{remark}
The following example shows that a compact operator need not to be $u$-$L$- or $u$-$M$-weakly compact.
\begin{example}
Let $T:\ell^1\to \ell^\infty$ be an operator defined as follows
$$T(a_n)=(\sum_{n=1}^\infty a_n,\sum_{n=1}^\infty a_n,\sum_{n=1}^\infty a_n,\cdots).$$
Clearly, $T$ is of finite rank and so is a compact operator. Now, let $\{e_n\}$ be the standard basis of $\ell^1$. We see that $\norm{Te_n\linf (1,1,1,\cdots)}=1$ for all $n$, so $T$ is not $u$-$M$-weakly compact.

On the other hand, we can easily see
$$\sol(T(U))=\{x\in\ell^\infty : \abs{x}\leq (1,1,1,\cdots)\},$$
where $U$ is the closed unit ball of $\ell^1$. Therefore,  $\{e_n\}\subset \sol(T(U))$.  Now, $\norm{e_n\linf (1,1,1,\cdots)}=1$ for all $n$, hence $T$ is not $u$-$L$-weakly compact.
\end{example}
The notions of
$M$- and $L$-weakly compact operators
are in duality to each other, see \cite[Theorem 5.64]{aliprantis2006positive}. By the following example we show that
$u$-$M$- and $u$-$L$-weakly compact operators does not have the same duality properties.
\begin{example}
By $I_{\ell^1}, I_{\ell^\infty}$ and $I_{(\ell^\infty)'}$ we denote the identity operators of $\ell^1, \ell^\infty$ and $(\ell^\infty)'$, respectively. We know that $I'_{\ell^1}=I_{\ell^\infty}$ and $I'_{\ell^\infty}=I_{(\ell^\infty)'}$. Since $\ell^1$ and $(\ell^\infty)'$ are $AL$-spaces, so they have order continuous norm. Therefore, it follows from Theorem \ref{th:ocn} that $I_{\ell^1}$ and $I_{(\ell^\infty)'}$ are both $u$-$M$- and $u$-$L$-weakly compact operators. On the other hand, $I_{\ell^\infty}$ is neither $u$-$M$- nor  $u$-$L$-weakly compact.
\end{example}
The $u$-$L$- and $u$-$M$-weakly compact operators have a lattice approximation properties like $L$- and $M$-weakly compact operators. To prove it we need a slightly modified version of Theorem 4.36 of \cite{aliprantis2006positive} that can be proved using the argument of the same theorem. Recall that a map $f:V\to W$ from a vector space $V$ to an ordered vector space $W$ is called subadditive if for each $x,y\in V$ we have $f(x+y)\leq f(x)+f(y)$.
\begin{lemma}\label{lem:lem:epsilon}
Let $T:E\rightarrow X$ be a continuous operator from a Banach
lattice $E$ to a Banach space $X$, let $A$ be a norm bounded solid subset of $E$,
and let $f:X\to \mathbb{R}$ be a norm continuous subadditive function. If $f(Tx_n)\rightarrow 0$ holds
for each disjoint sequence $\{x_n\}$ in $A$, then for each $\epsilon>0$ there exists some
$u\in E^+$ lying in the ideal generated by A such that
$$f(T(|x|-u)^+) < \epsilon$$
holds for all $x \in A$.
\end{lemma}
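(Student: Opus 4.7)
The plan is to argue by contradiction, closely following the proof of Aliprantis--Burkinshaw's Theorem 4.36, with the norm $\|\cdot\|$ on $X$ systematically replaced by the subadditive norm-continuous function $f$. Applying the hypothesis to the constant zero sequence, which lies in $A$ by solidity and is trivially disjoint, yields $f(0)=0$; norm-continuity of $f$ then guarantees that $f$ is arbitrarily small on norm-small subsets of $X$. Together with subadditivity, these are exactly the two properties of the norm actually used in the classical proof, so the argument transfers.

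Suppose the conclusion fails: there is $\epsilon>0$ such that for every $u\in E^+$ lying in the ideal generated by $A$, some $x\in A$ satisfies $f(T(|x|-u)^+)\geq\epsilon$. Since $A$ is solid, we may replace $x$ by $|x|$ and assume $x\in A^+$. The goal is to construct inductively a disjoint sequence $\{x_n\}\subseteq A^+$ with $f(Tx_n)\geq\epsilon/2$ for every $n$, contradicting the hypothesis.

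For the construction, take $u_0=0$ and pick $x_1\in A^+$ with $f(Tx_1)\geq\epsilon$. Assuming disjoint $x_1,\ldots,x_n\in A^+$ have been selected, set $u_n=k_n(x_1+\cdots+x_n)$ for some large integer $k_n$ to be chosen below, pick $y_{n+1}\in A^+$ with $f(T(y_{n+1}-u_n)^+)\geq\epsilon$, and define $x_{n+1}$ as the disjointification of $(y_{n+1}-u_n)^+$ from $x_1+\cdots+x_n$ produced by the standard lattice manipulations in the proof of A--B's Theorem 4.36. Solidity of $A$ ensures $x_{n+1}\in A^+$, while by construction $x_{n+1}\perp x_i$ for $i\leq n$.

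The key estimate is that, writing $(y_{n+1}-u_n)^+=x_{n+1}+r_{n+1}$ for a positive perturbation $r_{n+1}$ introduced to enforce disjointness, subadditivity of $f$ gives $f(Tx_{n+1})\geq f(T(y_{n+1}-u_n)^+)-f(Tr_{n+1})\geq\epsilon - f(Tr_{n+1})$. The classical lattice estimate bounds $\|r_{n+1}\|$ by a quantity tending to $0$ as $k_n\to\infty$, so choosing $k_n$ large enough that $\|r_{n+1}\|<\delta$, where $\delta>0$ is the modulus of continuity of $f$ at $0$ corresponding to $\epsilon/2$, forces $f(Tr_{n+1})<\epsilon/2$ and hence $f(Tx_{n+1})\geq\epsilon/2$. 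I expect the main obstacle to be extracting the specific disjointification identity used in A--B, checking that the bound on $\|r_{n+1}\|$ depends only on $k_n$ and the uniform norm-bound $\sup_{x\in A}\|x\|$, and verifying that this bound does not secretly rely on the homogeneity of $\|\cdot\|$, so that the passage from $\|\cdot\|$ to $f$ remains legitimate.
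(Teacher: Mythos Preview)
Your proposal is correct and coincides with the paper's approach: the paper does not give an independent proof but simply remarks that the lemma is a slight modification of Theorem~4.36 in Aliprantis--Burkinshaw and ``can be proved using the argument of the same theorem.'' Your plan is precisely to run that argument with $f$ in place of $\|\cdot\|_X$, and your identification of the three properties actually used---$f(0)=0$ (via the zero disjoint sequence), norm-continuity at $0$, and subadditivity in the form $f(a)\geq f(a+b)-f(b)$---is exactly what makes the transfer legitimate; the disjointification step in A--B is carried out entirely with lattice operations and norm estimates in $E$, so no homogeneity of the target functional is needed.
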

\begin{theorem}\label{th:lem:epsilon}
Let $T:E\to F$ be a continuous  operator between two Banach lattices $E$ and $F$, let $A$ be a norm bounded solid subset of $E$. If $Tx_n\xrightarrow{un}0$ holds for each disjoint sequence $\{x_n\}$ in $A$, then for each $w\in F^+$ and each $\epsilon >0$ there exists some $u\in E^+$ lying in the ideal generated by $A$ such that 
$$\| \abs{T(|x|-u)^+} \wedge w \| < \epsilon$$
holds for all $x\in A$.
\end{theorem}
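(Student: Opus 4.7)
The plan is to reduce the theorem directly to Lemma \ref{lem:lem:epsilon} by choosing an appropriate subadditive norm-continuous function $f:F\to\mathbb{R}$ whose composition with $T$ on the "remainder" $(|x|-u)^+$ encodes the desired un-type estimate with respect to the fixed $w\in F^+$. The natural candidate is
$$f(y):=\bigl\lVert \, \lvert y\rvert \wedge w \,\bigr\rVert, \qquad y\in F.$$
Once I verify that $f$ meets the three hypotheses of Lemma \ref{lem:lem:epsilon}, the conclusion $\lVert |T(|x|-u)^+|\wedge w\rVert=f\bigl(T(|x|-u)^+\bigr)<\epsilon$ falls out immediately.

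First I would check the structural properties of $f$. Norm continuity follows from the lattice inequality $\bigl| \,|y|\wedge w - |z|\wedge w\,\bigr|\le \bigl||y|-|z|\bigr|\le |y-z|$ together with the monotonicity of the Banach-lattice norm, giving $|f(y)-f(z)|\le \lVert y-z\rVert$. For subadditivity, the standard inequality $(a+b)\wedge c\le a\wedge c+b\wedge c$ for $a,b,c\ge 0$ combined with $|y+z|\le |y|+|z|$ yields $|y+z|\wedge w\le |y|\wedge w+|z|\wedge w$, and another application of norm monotonicity delivers $f(y+z)\le f(y)+f(z)$.

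Next I would verify the disjoint-sequence hypothesis. Let $\{x_n\}$ be a disjoint sequence in $A$. By the assumption of the theorem, $Tx_n\xrightarrow{un}0$, which by the definition of un-convergence means precisely that $\lVert |Tx_n|\wedge u\rVert\to 0$ for every $u\in F^+$. Specializing to our fixed $u=w$, this says $f(Tx_n)\to 0$, as required.

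With all hypotheses of Lemma \ref{lem:lem:epsilon} in place (the operator $T:E\to F\subset F$ is continuous from a Banach lattice to a Banach space, $A$ is norm bounded and solid, and $f$ is norm continuous and subadditive with $f(Tx_n)\to 0$ for every disjoint sequence in $A$), I invoke it to produce, for the given $\epsilon>0$, some $u\in E^+$ in the ideal generated by $A$ satisfying $f\bigl(T(|x|-u)^+\bigr)<\epsilon$ for every $x\in A$. Unwinding the definition of $f$ gives the claimed inequality. The whole argument is essentially a packaging step, so there is no serious obstacle; the only thing to be careful about is the subadditivity computation, where one must remember that $\wedge$ distributes over $+$ only as a one-sided inequality.
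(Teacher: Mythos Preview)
Your proof is correct and follows essentially the same approach as the paper: define $f_w(y)=\lVert\,|y|\wedge w\,\rVert$, observe that it is norm continuous and subadditive, use the un-null hypothesis to get $f_w(Tx_n)\to 0$ for disjoint sequences in $A$, and then invoke Lemma~\ref{lem:lem:epsilon}. In fact you supply more detail than the paper, which simply declares the subadditivity and continuity of $f_w$ to be obvious.
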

\begin{proof}
Let $w\in F^+$ and $\epsilon >0$ be arbitrary but fixed.
Define map $f_w:F\to \mathbb{R}$ as
$$f_w(x)=\norm{\abs{x}\linf w},$$
for each $x\in F$. It is obvious that $f_w$ is subadditive and norm continuous.

Let $\{x_n\}$ be a disjoint sequence in $A$. It follows from assumption and 
$f_w(Tx_n)=\norm{\abs{Tx_n}\linf w}$
that $f_w(Tx_n)\to 0$.
Therefore, by Lemma \ref{lem:lem:epsilon} there exists some $u\in E^+$ lying in the ideal generated by $A$ such that 
$$f_w(T(|x|-u)^+) < \epsilon$$
for all $x \in A$.  That is,
$$\norm{\abs{T(|x|-u)^+}\linf w} < \epsilon$$
for all $x \in A$. Thus, the proof is complete.
%
%
\end{proof}
\begin{corollary}\label{th:epsilon}
For Banach lattices $E$ and $F$, and a Banach space $X$ the following statements hold:
\begin{enumerate}
\item%
If $T:E\to F$ is a $u$-$M$-weakly compact operator, then for each $w\in F^+$ and for each $\epsilon  >0 $ there exists some $u\in E^+$ such that
$$\| \abs{T(|x|-u)^+}\wedge w \| <\epsilon$$
holds for all $x\in E$ with $\|x\|\leq 1$.
\item%
If $T:X\to E$ is a $u$-$L$-weakly compact operator, then for each $w\in E^+$ and for each $\epsilon > 0$
there exists some $u\in E^+$ lying in the ideal generated by $T(X)$
satisfying
$$\| [(|Tx|-u)^+]\wedge w \| <\epsilon$$
for all $x\in X$ with $\| x\| \leq 1$.\label{th:epsilon:lw}
\end{enumerate}
\end{corollary}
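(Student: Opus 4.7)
Both assertions should fall out of Theorem \ref{th:lem:epsilon} once I choose the right norm-bounded solid set $A$ and, in the second case, the right operator. The plan in each part is to verify the hypothesis of that theorem and then re-interpret its conclusion on the closed unit ball of the appropriate space.

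For part (1) I would take $A = B_E$, the closed unit ball of $E$, which is solid (because $E$ is a Banach lattice) and norm bounded. The definition of $u$-$M$-weakly compactness says exactly that $Tx_n \xrightarrow{un} 0$ for every disjoint sequence $\{x_n\}$ in $B_E$, so the hypothesis of Theorem \ref{th:lem:epsilon} is satisfied. The conclusion, for a given $w \in F^+$ and $\epsilon > 0$, directly produces $u \in E^+$ (lying in the ideal generated by $B_E$, which is all of $E$, so the ideal information is vacuous here) with $\| \abs{T(|x|-u)^+} \wedge w \| < \epsilon$ for every $x$ in $B_E$, which is what is claimed.

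For part (2) the main obstacle is a domain mismatch: Theorem \ref{th:lem:epsilon} is phrased for an operator whose domain is a Banach lattice, whereas here $T$ is defined only on the Banach space $X$. My plan is to sidestep this by applying the theorem to the identity operator $I_E : E \to E$ with $A = \sol(T(U))$. This $A$ is solid by construction and norm bounded because $T$ is continuous, and for any disjoint sequence $\{y_n\} \subset A$ the $u$-$L$-weakly compactness of $T$ gives $y_n = I_E(y_n) \xrightarrow{un} 0$. Theorem \ref{th:lem:epsilon} then yields $u \in E^+$ in the ideal generated by $A$ such that $\| (|y|-u)^+ \wedge w \| < \epsilon$ for every $y \in A$. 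Specializing to $y = Tx$ for an arbitrary $x \in X$ with $\|x\| \leq 1$ (observing that $Tx \in T(U) \subset A$) gives the desired estimate, and a quick check that the ideal generated by $\sol(T(U))$ coincides with the ideal generated by $T(U) \subset T(X)$ places $u$ in the ideal specified in the statement. No other step looks delicate.
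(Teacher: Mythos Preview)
Your proposal is correct and is exactly the derivation the paper has in mind: the corollary is stated immediately after Theorem~\ref{th:lem:epsilon} with no separate proof, and both parts follow from that theorem by the choices you describe (taking $A=B_E$ in part~(1), and applying the theorem to $I_E$ with $A=\sol(T(U))$ in part~(2)). Your handling of the domain mismatch in part~(2) and the verification that the ideal generated by $\sol(T(U))$ sits inside the ideal generated by $T(X)$ are the only points needing comment, and you cover both.
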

In the following theorem we show that the class of $u$-$M$- and $u$-$L$-weakly compact operators are closed subspaces of the vector space of continuous operators.
\begin{theorem}
For Banach lattice $E$ and $F$, and a Banach space $X$ the following hold.
$LW_u(X,E)$ and $MW_u(E,F)$  are closed vector subspaces of $L(X,E)$ and $L(E, F)$, respectively. 
\end{theorem}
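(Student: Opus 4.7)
My plan is to prove the vector subspace and closedness properties separately, with the closedness of $LW_u$ being the subtle step. Closure under scalar multiplication is immediate: if $T$ is $u$-$M$-weakly compact then so is $\lambda T$ since $un$-convergence is linear, and for the $u$-$L$-case one uses $\sol((\lambda T)(U))=\abs{\lambda}\sol(T(U))$. Additivity of $MW_u$ follows from the inequality $\abs{(T+S)x_k}\wedge w\leq \abs{Tx_k}\wedge w+\abs{Sx_k}\wedge w$, which reduces the problem to two $un$-null sequences. For additivity of $LW_u$, given a disjoint sequence $\{y_k\}\subset\sol((T+S)(U))$ with $\abs{y_k}\leq \abs{Tx_k}+\abs{Sx_k}$, I would apply the Riesz decomposition property to split $\abs{y_k}=a_k+b_k$ with $0\leq a_k\leq \abs{Tx_k}$ and $0\leq b_k\leq \abs{Sx_k}$; since $a_k,b_k\leq\abs{y_k}$, both sequences are disjoint, lie in $\sol(T(U))$ and $\sol(S(U))$ respectively, and are thus $un$-null by hypothesis.

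For closedness of $MW_u(E,F)$, let $T_n\to T$ in the operator norm with $T_n\in MW_u$, fix a norm-bounded disjoint sequence $\{x_k\}$ with $\norm{x_k}\leq M$, some $w\in F^+$, and $\epsilon>0$. Choosing $n$ with $M\norm{T-T_n}<\epsilon/2$, the estimate
\[
\norm{\abs{Tx_k}\wedge w}\leq \norm{(T-T_n)x_k}+\norm{\abs{T_n x_k}\wedge w}
\]
bounds the first term uniformly in $k$, while the second tends to $0$ as $k\to\infty$ by $u$-$M$-weak compactness of $T_n$.

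The main obstacle lies in the closedness of $LW_u(X,E)$: given a disjoint $\{y_k\}\subset\sol(T(U))$ with witnesses $\abs{y_k}\leq \abs{Tx_k}$, the shifted sequence $\{T_n x_k\}$ is not disjoint, and $y_k$ need not lie in $\sol(T_n(U))$, so the $MW_u$ argument cannot be copied verbatim. My fix is to pass to the auxiliary sequence $u_k:=\abs{y_k}\wedge \abs{T_n x_k}$. It is disjoint (since $u_k\leq \abs{y_k}$) and lies in $\sol(T_n(U))$ (since $u_k\leq\abs{T_n x_k}$), hence $u_k\xrightarrow{un}0$ by hypothesis on $T_n$. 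The residual obeys $\abs{y_k}-u_k=(\abs{y_k}-\abs{T_n x_k})^+\leq \abs{(T-T_n)x_k}$, whose norm is at most $\norm{T-T_n}$. Using $\abs{y_k}\wedge w\leq u_k\wedge w+(\abs{y_k}-u_k)\wedge w$ and taking norms, then choosing $n$ large followed by $k$ large, yields $y_k\xrightarrow{un}0$.
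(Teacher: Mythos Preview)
Your argument is correct in all four parts. The subspace verifications and the closedness of $MW_u(E,F)$ match the paper's proof essentially line for line (Riesz decomposition for additivity of $LW_u$, and the estimate $\norm{\abs{Tx_k}\wedge w}\leq \norm{(T-T_n)x_k}+\norm{\abs{T_n x_k}\wedge w}$ for closedness of $MW_u$).

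Where you genuinely diverge is in the closedness of $LW_u(X,E)$. The paper does not use your meet trick $u_k=\abs{y_k}\wedge\abs{T_n x_k}$; instead it invokes the lattice approximation result (Corollary~\ref{th:epsilon}\,(\ref{th:epsilon:lw})), which in turn rests on a variant of \cite[Theorem~4.36]{aliprantis2006positive}: given the approximating operator $S$ and $w\in E^+$, it produces $u\in E^+$ with $\norm{(\abs{Sx}-u)^+\wedge w}<\epsilon/3$ uniformly over the unit ball, then Riesz-decomposes $\abs{y_n}$ against the three-term majorant $\abs{(S-T)u_n}+(\abs{Su_n}-u)^+ + \abs{Su_n}$ and handles the pieces separately. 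Your route is more elementary and self-contained: the identity $\abs{y_k}-u_k=(\abs{y_k}-\abs{T_n x_k})^+\leq \abs{(T-T_n)x_k}$ immediately isolates a uniformly small residual, while $u_k$ is simultaneously disjoint (as $u_k\leq\abs{y_k}$) and in $\sol(T_n(U))$ (as $u_k\leq\abs{T_n x_k}$), so no external approximation lemma is needed. The paper's approach, on the other hand, showcases the approximation machinery it has just developed, which is of independent interest for the class of $u$-$L$-weakly compact operators.
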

\begin{proof}
At first we show that $LW_u(X,E)$ is a subspace of $L(X,E)$.
Obviously, $LW_u(X,E)$ is closed under the scalar multiplication. Let $T,S\in LW_u(X,E)$ we prove that $T+S\in LW_u(X,E)$.
Let $U$ be the closed unit ball of $X$. We claim that $\sol((T+S)(U))^+\subseteq \sol(T(U))^++\sol(S(U))^+$. Indeed, we have
\begin{eqnarray*}
&&\sol((T+S)(U))^+=\\
&& \quad \{y\in E^+ : y\leq\abs{(T+S)(u)}\text{ for some }u\in U\}\\
&&=\{y_1+y_2 : y_1+y_2\leq \abs{Tu+Su}\text{ for some }u\in U\\
& &\qquad\text{ such that }0\leq y_1\leq\abs{Tu}, 0\leq y_2\leq\abs{Su}\}\\
&&\subseteq \{y_1+y_2 : y_1\in\sol(T(U))^+, y_2\in\sol(S(U))^+\}\\
&&=  \sol(T(U))^++\sol(S(U))^+
\end{eqnarray*}
Now, assume that $\{y_n\}\subset \sol((T+S)(U))$ be a disjoint sequence. Clearly, $\{\abs{y_n}\}$ is a disjoint sequence in 
$\sol((T+S)(U))$. By the above argument there exist $\{y_{1,n}\}\subset \sol(T(U))^+$ and $\{y_{2,n}\}\subset \sol(S(U))^+$ such that
$\abs{y_n}=y_{1,n}+y_{2,n}$ for each $n$. If $n\neq m$ then
$y_{i,n}\linf y_{i,m}\leq (y_{1,n}+y_{2,n})\linf (y_{1,m}+y_{2,m})=\abs{y_n}\linf \abs{y_m}=0$ for $i=1,2$. Thus, $\{y_{i,n}\}$ is a disjoint sequence for $i=1,2$. Since $T$ and $S$ are $u$-$L$-weakly compact we have $y_{i,n}\xrightarrow{un}0$ for $i=1,2$.
Now, for $w\in E^+$ we have
\begin{eqnarray*}
\norm{\abs{y_n}\linf w}&=&\norm{(y_{1,n}+y_{2,n})\linf w}\\
&\leq &\norm{(y_{1,n}\linf w) + (y_{2,n}\linf w)}\\
&\leq &\norm{y_{1,n}\linf w} + \norm{y_{2,n}\linf w},
\end{eqnarray*}
therefore, $y_n\xrightarrow{un}0$. That is, $T+S\in LW_u(X,E)$.

Now, to prove that $LW_u(X,E)$ is closed in $L(X,E)$ we show that 
$T\in \overline{LW_u(X,E)}$ is $u$-$L$-weakly compact.
Let $\epsilon >0$ and $w\in E^+$ be arbitrary but fixed from now on and let $S\in LW_u(X,E)$ such that $\norm{T-S}<\frac{\epsilon}{3}$. Assume that $\{y_n\}\subset \sol(T(U))$ be a disjoint sequence we want to show that $y_n\xrightarrow{un}0$.
For each $n\in\mathbb{N}$ choose some $u_n\in U$ such that $\abs{y_n}\leq \abs{T(u_n)}$. By using Part \ref{th:epsilon:lw} of Corollary \ref{th:epsilon} choose $u\in E^+$ lying in the ideal generated by $S(X)$ such that 
$\| (|Sx|-u)^+\wedge w \| <\frac{\epsilon}{3}$ for all $x\in U$.
We have
\begin{eqnarray*}
0 \leq \abs{y_n} &\leq & \abs{(S-T)u_n}+\abs{Su_n}\\
& \leq & \abs{(S-T)u_n}+(\abs{Su_n}-u)^++\abs{Su_n}.
\end{eqnarray*}
It follows from \cite[Theorem 1.13]{aliprantis2006positive} that for each $n\in\mathbb{N}$ there exist $v_n,w_n,z_n\in E^+$ with
$v_n\leq \abs{(S-T)u_n}, w_n\leq (\abs{Su_n}-u)^+$ and $z_n\leq \abs{Su_n}$ such that $\abs{y_n}=v_n+w_n+z_n$. Clearly, $\{z_n\}$ is a disjoint sequence in $\sol(S(U))$.
Since $S$ is $u$-$L$-weakly compact, $z_n\xrightarrow{un}0$.
Specifically, there exists $N\in\mathbb{N}$ such that we have $\norm{z_n\linf w}< \frac{\epsilon}{3}$  for all $n\geq N$.
On the other hand,
$\norm{v_n}\leq \norm{(S-T)u_n} \leq \norm{S-T}<\frac{\epsilon}{3},$
and since $\{u_n\}\subset U$, we have
$\norm{w_n\linf w}\leq \norm{(\abs{Su_n}-u)^+\linf w}<\frac{\epsilon}{3},$
for all $n\in\mathbb{N}$.
Therefore,
\begin{eqnarray*}
0 \leq \norm{\abs{y_n}\linf w} &\leq & \norm{(v_n + w_n + z_n)\linf w}\\
& \leq & \norm{v_n}+\norm{w_n\linf w}+\norm{z_n\linf w}\\
& < & \frac{\epsilon}{3} + \frac{\epsilon}{3} + \frac{\epsilon}{3} = \epsilon.
\end{eqnarray*}
As $w\in E^+$ is arbitrary hence $y_n\xrightarrow{un}0$. Therefore, $T\in LW_u(X,E)$.\\
Now, we prove that  $MW_u(E,F)$ is a norm closed vector subspace of $L(E,F)$. 
It is obvious that the set of all $u$-$M$-weakly compact operators between $E$ and $F$ is a vector subspace of $L(E,F)$. Let $T$ be in the closure of the set of all $u$-$M$-weakly compact operators, and let $\{x_n\}$ be a disjoint sequence of $E$ satisfying $\|x_n\|\leq 1$ for all $n$. We have to show 
that $Tx_n\xrightarrow{un}0$. Let $w\in F^+$ and $\epsilon > 0$ be arbitrary but fixed from now on. There exists some $u$-$M$-weakly compact operator $S:E\to F$ such that $\|T-S\|<\epsilon$. 
We have
$|Tx_n|\leq |(T-S)x_n|+|Sx_n|$. Therefore,
\begin{eqnarray*}
|Tx_n|\wedge w & \leq & (|(T-S)x_n|+|Sx_n|)\wedge w\\
& \leq & |(T-S)x_n| + |Sx_n| \wedge w.
\end{eqnarray*}
Hence,
\begin{eqnarray*}
\| |Tx_n|\wedge w \| & \leq & \| |(T-S)x_n| + |Sx_n| \wedge w \|\\
& \leq & \| (T-S)x_n \| + \| |Sx_n| \wedge w\|.
\end{eqnarray*}

So it follows that,
$$\| |Tx_n|\wedge w \| \leq \epsilon + \| |Sx_n| \wedge w\|.$$
Since $\epsilon > 0$ is arbitrary we see that $\| |Tx_n|\wedge w \|\to 0$ holds. Now the proof follows from the fact that $w\in F^+$ is arbitrary.
\end{proof}
\begin{theorem}
Assume that a Banach lattice $E$ has order continuous norm and $G$ is order dense sublattice of  $E$. If $0\leq T\in MW_u(G,F)$, then $T\in MW_u(E,F)$.
\end{theorem}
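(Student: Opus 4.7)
The plan is to reduce the $u$-$M$-weak compactness condition on $E$ to the one given on $G$ via positive approximation, so the first step is to secure such an approximation. Since $E$ has order continuous norm and $G$ is an order dense sublattice, $G$ is in fact \emph{norm dense} in $E$: for any $x\in E^+$ the upward directed family $\{g\in G^+ : g\leq x\}$ has $x$ as its supremum by order density, and this order convergence is upgraded to norm convergence by order continuity. Consequently the positive continuous operator $T:G\to F$ extends uniquely to a positive continuous operator on all of $E$, and I will keep calling the extension $T$; this is what I must show lies in $MW_u(E,F)$.

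Now fix a norm bounded disjoint sequence $\{x_n\}$ in $E$ with $\|x_n\|\leq 1$, together with $w\in F^+$; the goal is $\||Tx_n|\wedge w\|\to 0$. Because $T\geq 0$, we have $|Tx_n|\leq T|x_n|$, so it suffices to control $T|x_n|\wedge w$, and I may pass to the positive disjoint sequence $y_n:=|x_n|$ (still norm bounded by $1$). For each $n$, using the density step, I pick $g_n\in G$ with $0\leq g_n\leq y_n$ and $\|y_n-g_n\|<1/n$. The domination $g_n\leq y_n$ automatically keeps $\{g_n\}$ disjoint and norm bounded by $1$ in $G$.

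The hypothesis $T\in MW_u(G,F)$ then gives $Tg_n\xrightarrow{un}0$ in $F$, so in particular $\|Tg_n\wedge w\|\to 0$. Writing $Ty_n=Tg_n+T(y_n-g_n)$ with both summands positive and using the elementary inequality $(a+b)\wedge c\leq (a\wedge c)+b$ valid for $a,b,c\geq 0$, I obtain
$$\|Ty_n\wedge w\|\leq \|Tg_n\wedge w\|+\|T(y_n-g_n)\|\leq \|Tg_n\wedge w\|+\|T\|/n\longrightarrow 0,$$
and since $w\in F^+$ was arbitrary this proves $Tx_n\xrightarrow{un}0$. The only point that needs real care is the preliminary density/extension step, because it is where the two hypotheses on $E$ (order continuity of the norm and order density of $G$) must be combined; once $G$ is known to be norm dense in $E$, the remainder is a clean positive-dominated approximation and presents no further difficulty.
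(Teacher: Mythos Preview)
Your proof is correct and follows essentially the same route as the paper: reduce to a positive disjoint sequence, approximate each term from below by an element of $G$ using order density together with order continuity of the norm, observe that the approximants remain disjoint and norm bounded in $G$, invoke the hypothesis on $G$, and control the remainder by continuity of $T$. If anything, you are more careful than the paper on one point: you explicitly justify that $G$ is norm dense in $E$ and that $T$ therefore extends continuously (and positively) to $E$, whereas the paper applies $T$ to $x_n\in E$ without comment.
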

\begin{proof}
Let $\{x_n\}$ be a norm bounded disjoint sequence in $E$ and $0\leq T\in MW_u(G,F)$. It follows that $\{x^-_n\}$ and $\{x^+_n\}$ are norm bounded disjoint sequence in $E$, and so  
\begin{eqnarray*}
\| |Tx_n|\wedge w \|  \leq \| T|x_n|\wedge w \|\leq \| Tx^-_n\wedge w \|+\| Tx^+_n\wedge w \|,
\end{eqnarray*}
whenever $w\in F^+$.
Therefore, we may assume without loss of generality that $x_n >0$ for all $n\in \mathbb{N}$. Since $G$ is order dense in $E$, for each   $n\in \mathbb{N}$, there is a sequence $\{x_{n,k}\}$ such that $0<x_{n,k}\uparrow x_n$. It follows that $x_n-x_{n,k}\downarrow 0$, and so by assumption we have $\| x_n-x_{n,k}\| \rightarrow 0$. By continuity of $T$, we have $\| T(x_n-x_{n,k})\| \rightarrow 0$, and so $\| T(x_n-x_{n,k})\wedge w\| \rightarrow 0$ whenever $w\in F^+$.
Let $w\in F^+$, $\epsilon>0$ and $n\in \mathbb{N}$. Then there is some $k_n$ such that   $\| T(x_n-x_{n, k_n})\wedge w\|<\frac{\epsilon}{2} $.  
On the other hand, for each $k\in \mathbb{N}$ the sequence 
${\{x_{n,k_n}}\}_{n=1}^{+\infty}$ is a norm bounded disjoint sequence in $G$,  by assumption we have $\| T(x_{n,k_n})\wedge w\| \rightarrow 0$, and so there is some $N\in\mathbb{N}$ such that $\| T(x_{n,k_n})\wedge w\| <\frac{\epsilon}{2}$ for all $n\geq N$.
Thus we have 
\begin{eqnarray*}
\| Tx_n\wedge w \| \leq \|  T(x_n-x_{n,k_n})\wedge w \|+\| T(x_{n,k_n})\wedge w \|<\frac{\epsilon}{2}+\frac{\epsilon}{2}=\epsilon,
\end{eqnarray*}
for all $n\geq N$, and so the proof follows.
\end{proof}

The following theorem shows that $u$-$L$- and $u$-$M$-weakly compact operators satisfy the domination problem.
\begin{theorem}
$u$-$L$- and $u$-$M$-weakly compact operators satisfy the domination problem.
 That is,
for Banach lattice $E$ and $F$, and a Banach space $X$,
if
$T\in MW_u(E,F)$ (resp. $T\in LW_u(X,E)$) and $S\in L(E, F)$ (resp. $S\in L(X,E)$)  such that $0\leq S \leq T$,  then $S\in MW_u(E,F)$ (resp. $S\in LW_u(X,E)$).
\end{theorem}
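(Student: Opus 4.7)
The plan is to handle the two parts separately, in each case reducing to the corresponding hypothesis on $T$ via the pointwise inequality $\abs{Sx}\leq S\abs{x}\leq T\abs{x}$, which holds for any positive operator $S$ with $0\leq S\leq T$ (the first estimate because $S$ is positive, the second because $T-S\geq 0$ applied to $\abs{x}\geq 0$).

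For the $u$-$M$-weakly compact assertion, I would begin with an arbitrary norm bounded disjoint sequence $\{x_n\}$ in $E$ and observe that $\{\abs{x_n}\}$ is then also a norm bounded disjoint sequence (of positive elements). Fixing $w\in F^+$, monotonicity of $\linf$ together with the inequality above gives
$$\abs{Sx_n}\linf w \;\leq\; T\abs{x_n}\linf w,$$
so $\norm{\abs{Sx_n}\linf w}\leq \norm{T\abs{x_n}\linf w}$. Since $T\in MW_u(E,F)$, we have $T\abs{x_n}\xrightarrow{un}0$, hence the right-hand side tends to zero. As $w\in F^+$ was arbitrary, this yields $Sx_n\xrightarrow{un}0$, and so $S\in MW_u(E,F)$.

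For the $u$-$L$-weakly compact assertion, I would take an arbitrary disjoint sequence $\{y_n\}$ in $\sol(S(U))$ and, for each $n$, choose $x_n\in U$ with $\abs{y_n}\leq \abs{Sx_n}$. Combining $\abs{Sx_n}\leq S\abs{x_n}\leq T\abs{x_n}$ with $\norm{\abs{x_n}}=\norm{x_n}\leq 1$, I would conclude that $\{y_n\}\subseteq \sol(T(U))$. The disjointness of $\{y_n\}$ together with $T\in LW_u(X,E)$ then forces $y_n\xrightarrow{un}0$ directly, so $S\in LW_u(X,E)$.

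The only subtle point is interpretive rather than technical: the inequality $\abs{Sx}\leq S\abs{x}$ presupposes that the domain is a vector lattice, so for the $u$-$L$-weakly compact half I am tacitly treating $X$ as a Banach lattice (otherwise the ordering $0\leq S\leq T$ has no natural meaning for operators out of a bare Banach space). Once this is granted, each half reduces in a single step, and no analog of the Dodds--Fremlin machinery is required because $un$-convergence, unlike norm convergence, is already preserved by domination from $T$.
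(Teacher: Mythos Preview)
Your argument is correct in both halves. The paper states this theorem without proof, so there is no authorial argument to compare against; the one-step domination via $\abs{Sx}\leq S\abs{x}\leq T\abs{x}$ (and, for the $u$-$L$ case, the resulting inclusion $\sol(S(U))\subseteq\sol(T(U))$) is the natural route and is almost certainly what the authors intended. Your closing caveat is also well taken: the hypothesis $0\leq S\leq T$ for operators with domain $X$ is only meaningful once $X$ carries a lattice structure, so in the $u$-$L$-weakly compact clause one must read $X$ as a Banach lattice despite the paper's wording.
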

\begin{proposition}
The following assertions hold:
\begin{enumerate}
\item
If $T:E\to F$ between two Banach lattices is a lattice homomorphism that its range is norm dense in $F$ and $E$ has order continuous norm, then $T$ is $u$-$M$-weakly compact.
\item
If $T:X\to E$ from a Banach space $X$ to a Banach lattice $E$ is continuous and $E$ has order continuous norm then $T$ is $u$-$L$-weakly compact.
\end{enumerate}
\end{proposition}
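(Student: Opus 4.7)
The plan is to dispose of (2) almost immediately and then reduce (1) to the same underlying fact by exploiting density of the range together with the lattice homomorphism property of $T$.

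For (2), let $\{y_n\}$ be a disjoint sequence in $\sol(T(U))$ and fix $w\in E^+$. The sequence $\{\abs{y_n}\linf w\}$ is disjoint and order bounded by $w$. Since $E$ has order continuous norm, every disjoint order bounded sequence in $E^+$ is norm null, so $\norm{\abs{y_n}\linf w}\to 0$. As $w\in E^+$ is arbitrary this gives $y_n\xrightarrow{un}0$, i.e.\ $T\in LW_u(X,E)$; continuity of $T$ plays no role beyond ensuring $T(U)$ is bounded.

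For (1), let $\{x_n\}$ be a norm bounded disjoint sequence in $E$, and fix $w\in F^+$ together with $\epsilon>0$. The key preliminary step is to approximate $w$ by a positive element of the range of $T$: using density of $T(E)$ in $F$ pick $y_0\in E$ with $\norm{Ty_0-w}<\epsilon$ and set $y:=\abs{y_0}\in E^+$. Since $T$ is a lattice homomorphism, $Ty=T\abs{y_0}=\abs{Ty_0}\geq 0$, and by the reverse triangle inequality $\norm{Ty-w}=\norm{\abs{Ty_0}-\abs{w}}\leq\norm{Ty_0-w}<\epsilon$.

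With this $y$ in hand the rest is lattice arithmetic. From $w\leq Ty+(w-Ty)^+$ and the elementary inequality $a\linf(b+c)\leq a\linf b+c$ for $a,b,c\geq 0$, combined with the identity $\abs{Tx_n}\linf Ty=T(\abs{x_n}\linf y)$ valid for a lattice homomorphism, one obtains
\[
\abs{Tx_n}\linf w\leq T(\abs{x_n}\linf y)+(w-Ty)^+.
\]
Taking norms, the second summand contributes at most $\epsilon$, while $\{\abs{x_n}\linf y\}$ is a disjoint sequence in the order interval $[0,y]\subset E$, hence norm null by order continuity of $E$; continuity of $T$ then gives $\norm{T(\abs{x_n}\linf y)}\to 0$. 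Therefore $\limsup_n\norm{\abs{Tx_n}\linf w}\leq\epsilon$, and since $\epsilon>0$ and $w\in F^+$ are arbitrary, $Tx_n\xrightarrow{un}0$. The only nontrivial step is manufacturing the positive approximant $y$; this is exactly where both hypotheses in (1) enter — density supplies $y_0$, and the lattice homomorphism property converts $\abs{y_0}$ into a positive element whose image still approximates $w$.
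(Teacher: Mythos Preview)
Your proof is correct and follows essentially the same approach as the paper: for (1) both arguments approximate $w$ by an element of the range, apply the lattice homomorphism identity $\abs{Tx_n}\linf T\abs{x}=T(\abs{x_n}\linf\abs{x})$, and invoke order continuity of $E$; your preliminary passage from $y_0$ to $y=\abs{y_0}$ is a cosmetic variant of the paper's in-line handling of absolute values within the estimate. For (2) the paper merely cites an external reference, whereas you spell out the underlying one-line argument directly.
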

\begin{proof}
\begin{enumerate}
\item
Let $\{x_n\}$ be a norm bounded disjoint sequence in $E$ and $w\in F^+$. For $\epsilon >0$ there exists $x\in E$ such that $\norm{Tx - w}< \frac{\epsilon}{2}$. On the other hand, since $E$ has order continuous norm therefore $\{x_n\}$ is $un$-null. Hence there exists $0<N\in\mathbb{N}$ such that for each $n\geq N$ we have
$\norm{\abs{x_n}\linf \abs{x}}<\frac{\epsilon}{2\norm{T}}$.
Now, for each $n\geq N$ we have
\begin{eqnarray*}
\norm{\abs{Tx_n}\linf w} & = & \norm{\abs{Tx_n}\linf \abs{w-Tx+Tx}}\\
& \leq & \norm{\abs{Tx_n}\linf \abs{w-Tx}+\abs{Tx_n}\linf \abs{Tx}}\\
& \leq & \norm{\abs{Tx_n}\linf \abs{w-Tx}}+\norm{\abs{Tx_n}\linf \abs{Tx}}\\
& = & \norm{\abs{Tx_n}\linf \abs{w-Tx}}+\norm{T\abs{x_n}\linf T\abs{x}}\\
& = & \norm{\abs{Tx_n}\linf \abs{w-Tx}}+\norm{T(\abs{x_n}\linf \abs{x})}\\
& \leq & \norm{w-Tx}+\norm{T}\norm{\abs{x_n}\linf \abs{x}}\\
& < & \frac{\epsilon}{2}+\norm{T}\frac{\epsilon}{2\norm{T}} = \epsilon ,
\end{eqnarray*}
therefore, $\{Tx_n\}$ is $un$-null so $T$ is  $u$-$M$-weakly compact.
\item
The proof is clear by Proposition 3.5 of \cite{kandic2017un}.
\end{enumerate}
\end{proof}
Recall that if $T:E\rightarrow F$ is an order bounded disjointness-preserving operator between two Banach lattices then $\abs{T}$ exists and is a lattice homomorphism. Therefore, by using \cite[Theorem 2.14]{aliprantis2006positive} and \cite[Theorem 3.1.4]{meyer2012banach} we have,
$\abs{\abs{T}x}=\abs{Tx}$ for all $x\in E$. In particular, for each $u\in F^+$ we have $\abs{\abs{T}x}\linf u=\abs{Tx}\linf u$.
\begin{theorem}
\label{th:mod_disjoint}
Let $T:E\rightarrow F$ be an order bounded disjointness-preserving operator between two Banach lattices. Then the following assertions are hold.
\begin{enumerate}
\item $T$ is $u$-$M$-weakly compact if and only if $\abs{T}$ is $u$-$M$-weakly compact,\label{mod_mwc}
\item If $T$ or $\abs{T}$ is $u$-$L$-weakly compact then both of them are $u$-$L$- and $u$-$M$-weakly compact.\label{mod_lwc}
\end{enumerate}
\end{theorem}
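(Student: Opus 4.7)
\textbf{Proof plan for Theorem \ref{th:mod_disjoint}.} Everything is driven by the two facts recorded in the remark immediately preceding the theorem: $\abs{Tx}=\abs{\abs{T}x}$ for every $x\in E$, and $\abs{T}$ is a lattice homomorphism on $E$. Part (\ref{mod_mwc}) will follow from the first fact alone; part (\ref{mod_lwc}) will use both.

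For Part (\ref{mod_mwc}), I would observe that for every $u\in F^+$ and every $x\in E$,
$$\abs{\abs{T}x}\wedge u=\abs{Tx}\wedge u.$$
Taking norms shows that $Tx_n\xrightarrow{un}0$ and $\abs{T}x_n\xrightarrow{un}0$ are literally the same condition for any sequence $\{x_n\}\subset E$. Since the defining test class (norm bounded disjoint sequences in $E$) is the same for $T$ and $\abs{T}$, the equivalence is immediate.

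Part (\ref{mod_lwc}) splits into two steps. First I would verify that $\sol(T(U))=\sol(\abs{T}(U))$, where $U$ is the closed unit ball of $E$: given $\abs{y}\leq\abs{Tx}$ with $x\in U$, rewrite $\abs{Tx}=\abs{\abs{T}x}$ with $\abs{T}x\in\abs{T}(U)$ to place $y$ in $\sol(\abs{T}(U))$, and the reverse inclusion is symmetric. Consequently $T$ and $\abs{T}$ are $u$-$L$-weakly compact simultaneously, as the class of disjoint test sequences is literally the same set.

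Second I would show that $u$-$L$-weak compactness forces $u$-$M$-weak compactness here. Given a norm bounded disjoint sequence $\{x_n\}\subset E$ with $\|x_n\|\leq M$, the fact that $\abs{T}$ is a lattice homomorphism forces $\{\abs{T}\abs{x_n}\}=\{\abs{Tx_n}\}$ to be disjoint in $F$, and each $\abs{Tx_n}/M=\abs{T(x_n/M)}$ lies in $\sol(T(U))$. The $u$-$L$-weak compactness hypothesis then yields $\abs{Tx_n}/M\xrightarrow{un}0$, hence $Tx_n\xrightarrow{un}0$, which is precisely the $u$-$M$-weak compactness of $T$; applying Part (\ref{mod_mwc}) transfers it to $\abs{T}$. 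I do not anticipate a serious obstacle; the only step requiring a moment of care is the $\sol$-equality, but once Meyer's identity $\abs{Tx}=\abs{\abs{T}x}$ is in hand it is essentially a one-line verification in each direction.
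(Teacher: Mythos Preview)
Your proposal is correct and follows essentially the same route as the paper's proof: Part~(\ref{mod_mwc}) is handled via the identity $\abs{\abs{T}x}\linf u=\abs{Tx}\linf u$, and Part~(\ref{mod_lwc}) proceeds by establishing $\sol(T(U))=\sol(\abs{T}(U))$ and then using that $\abs{T}$ is a lattice homomorphism to turn disjoint sequences in $E$ into disjoint sequences in $\sol(\abs{T}(U))$. The only cosmetic difference is that the paper quotes the $\sol$-equality from an external reference while you verify it directly from Meyer's identity, and the paper works with sequences in $U$ rather than normalizing by $M$ afterward.
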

\begin{proof}
\begin{enumerate}
\item%
Let $\{x_n\}$ be a norm bounded disjoint sequence in $E$. For each $u\in F^+$ and for each $n\in \mathbb{N}$, we have
$$\norm{\abs{\abs{T}x_n} \linf u}=\norm{\abs{Tx_n} \linf u}.$$
In other words, $\norm{Tx_n}\xrightarrow{un} 0$ if and only if $\norm{\abs{T}(x_n)}\xrightarrow{un} 0$. Thus, the proof is complete.
\item%
By using Proposition 2.7 of \cite{alavizadeh2018modu} $\sol(T(U))=\sol(\abs{T}(U))$, where $U$ is the closed unit ball of $E$. Therefore, $T$ is $u$-$L$-weakly compact if and only if $\abs{T}$ is $u$-$L$-weakly compact. Now, without loss of generality we assume that $\abs{T}$ is $u$-$L$-weakly compact. Let $\{x_n\}$ be a disjoint sequence in $U$. Since $\abs{T}$ is a lattice homomorphism, for $n\neq m$ we have
$$\abs{\abs{T}x_n}\linf \abs{\abs{T}x_m}=\abs{T}\abs{x_n}\linf \abs{T}\abs{x_m}=
\linf \abs{T}(\abs{x_n}\linf \abs{x_m})=0.$$
Therefore, $\{\abs{T}x_n\}$ is a disjoint sequence in $T(U)$. Since $\abs{T}$ is $u$-$L$-weakly compact, $\{\abs{T}x_n\}$ is $un$-convergent to zero. Thus, $\abs{T}$ is a $u$-$M$-weakly compact operator. It follows from previous part that $T$ is also $u$-$M$-weakly compact.
\end{enumerate}
\end{proof}
In the following theorem we present a characterization of Banach lattices with order continuous norm.
\begin{theorem}\label{th:ocn}
Let $E$ be a Banach lattice. The following statements are equivalent.
\begin{enumerate}
\item $E$ has order continuous norm.\label{th:ocn:1}
\item $I:E\to E$ is $u$-$L$-weakly compact.\label{th:ocn:2}
\item $I:E\to E$ is $u$-$M$-weakly compact.\label{th:ocn:3}
\end{enumerate}
\end{theorem}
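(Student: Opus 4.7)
The plan is to reduce everything to the classical characterization of order continuous norm (Theorem 4.14 of \cite{aliprantis2006positive}), which says that a Banach lattice has order continuous norm if and only if every order bounded disjoint sequence is norm null. First I would observe that the equivalence (\ref{th:ocn:2}) $\Leftrightarrow$ (\ref{th:ocn:3}) comes essentially for free: the closed unit ball $U$ of a Banach lattice is already solid (since $\abs{x}\leq\abs{y}$ and $\norm{y}\leq 1$ force $\norm{x}\leq 1$), so $\sol(I(U))=U$. Hence both (\ref{th:ocn:2}) and (\ref{th:ocn:3}) amount, after scaling, to the same statement, namely that every norm bounded disjoint sequence in $E$ is $un$-null. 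This reduces the theorem to proving (\ref{th:ocn:1}) $\Leftrightarrow$ (\ref{th:ocn:3}).

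For (\ref{th:ocn:1}) $\Rightarrow$ (\ref{th:ocn:3}), I would take a norm bounded disjoint sequence $\{x_n\}\subset E$ and fix $u\in E^+$. The sequence $\{\abs{x_n}\linf u\}$ is then a disjoint sequence in the order interval $[0,u]$, so by Theorem 4.14 of \cite{aliprantis2006positive} order continuity of the norm forces $\norm{\abs{x_n}\linf u}\to 0$. This is exactly $x_n\xrightarrow{un}0$, so $I$ is $u$-$M$-weakly compact.

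The converse (\ref{th:ocn:3}) $\Rightarrow$ (\ref{th:ocn:1}) is where the real content sits, but it is again short by contraposition using the same theorem. If $E$ did not have order continuous norm, Theorem 4.14 of \cite{aliprantis2006positive} would produce some $u\in E^+$, an order bounded disjoint sequence $\{x_n\}\subset[0,u]$, and an $\epsilon>0$ with $\norm{x_n}\geq\epsilon$ for all $n$. Since $\{x_n\}$ is order bounded it is certainly norm bounded, so if $I$ were $u$-$M$-weakly compact we would have $\norm{\abs{x_n}\linf u}\to 0$; but $\abs{x_n}\linf u=x_n$, which yields $\norm{x_n}\to 0$, a contradiction.

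The main thing to be careful about is the clean invocation of this standard two-way characterization and the verification that $\sol(I(U))=U$; once those are set up, the argument is essentially a one-line contrapositive. No subtler Banach-lattice machinery appears to be needed.
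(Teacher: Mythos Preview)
Your argument is correct. The paper states this theorem without proof, so there is nothing to compare against directly; your reduction via the classical characterization \cite[Theorem~4.14]{aliprantis2006positive} (order continuous norm $\Leftrightarrow$ every order bounded disjoint sequence is norm null) together with the observation that $\sol(I(U))=U$ is the natural route and handles all three equivalences cleanly. Note that the forward implications (\ref{th:ocn:1})$\Rightarrow$(\ref{th:ocn:2}) and (\ref{th:ocn:1})$\Rightarrow$(\ref{th:ocn:3}) are already covered by the paper's preceding Proposition (via \cite[Proposition~3.5]{kandic2017un} and the fact that $I$ is a surjective lattice homomorphism), so the only genuinely new content is the converse, for which your contrapositive is exactly right.
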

\section*{Conflict of interest}
The authors declare that they have no conflict of interest.



\end{document}